\newtheorem{theorem}{Theorem}[section]
\newtheorem{lemma}[theorem]{Lemma}
\newtheorem{corollary}[theorem]{Corollary}
\theoremstyle{definition}
\newtheorem{definition}[theorem]{Definition}
\newcommand\B{\mathbb{B}}
\newcommand\C{\mathbb{C}}
\newcommand\N{\mathbb{N}}
\renewcommand\P{\mathbb{P}}
\newcommand{\cO}{\mathcal{O}}
\newcommand{\der}{\mathrm{d}}
\newcommand{\id}{\mathrm{id}}
\newcommand{\pr}{\mathrm{pr}}
\newcommand{\T}{\mathrm{T}}
\DeclareMathOperator{\Aut}{Aut}
\DeclareMathOperator{\Bl}{Bl}
\newcommand{\Ell}{\mathrm{Ell}}
\begin{document}

\title[Oka complements of countable sets and non-elliptic Oka manifolds]{Oka complements of countable sets \\ and non-elliptic Oka manifolds}
\author{Yuta Kusakabe}
\address{Department of Mathematics, Graduate School of Science, Osaka University, Toyonaka, Osaka 560-0043, Japan}
\email{y-kusakabe@cr.math.sci.osaka-u.ac.jp}
\subjclass[2010]{Primary 32E10, 32H02, 32E30; Secondary 32M17}
\keywords{Oka manifold, ellipticity, tame set, automorphism, Hopf manifold}

\begin{abstract}
We study the Oka properties of complements of closed countable sets in $\C^{n}\ (n>1)$ which are not necessarily discrete.
Our main result states that every tame closed countable set in $\C^{n}\ (n>1)$ with a discrete derived set has an Oka complement.
As an application, we obtain non-elliptic Oka manifolds which negatively answer a long-standing question of Gromov.
Moreover, we show that these examples are not even weakly subelliptic.
It is also proved that every finite set in a Hopf manifold has an Oka complement and an Oka blowup.
\end{abstract}

\maketitle

%%%%%%%%%%%%%%%%%%%%%%%%%%%%%%%%%%%%%%%%%%
%
% Introduction
%
%%%%%%%%%%%%%%%%%%%%%%%%%%%%%%%%%%%%%%%%%%

\section{Introduction}

A complex manifold $Y$ is an \emph{Oka manifold} if any holomorphic map from an open neighborhood of a compact convex set $K\subset\C^n\ (n\in\N)$ to $Y$ can be uniformly approximated on $K$ by holomorphic maps $\C^n\to Y$.
The most basic examples of Oka manifolds are complex Euclidean spaces $\C^{n}\ (n\in\N)$.
This is a consequence of the classical Oka-Weil approximation theorem which generalizes the Runge approximation theorem.
For the historical background and the theory of Oka manifolds, we refer the reader to the survey \cite{Forstneric2013} and the comprehensive monograph \cite{Forstneric2017}.

It is natural to ask when a closed countable set $S$ in $\C^n$ has an Oka complement $\C^{n}\setminus S$.
In the case of $n=1$, we have a complete answer that the complement $\C\setminus S$ is Oka if and only if $S$ contains at most one point.
On the other hand, in the case of $n>1$, Forstneri\v{c} and Prezelj proved that every \emph{tame} discrete set in $\C^n$ has an Oka complement \cite[Theorem 1.6]{Forstneric2002}.
The following is the definition of tameness.

\begin{definition}
A closed countable set $S\subset\C^{n}$ is \emph{tame} if there exists a holomorphic automorphism $\varphi\in\Aut\C^{n}$ such that the closure of $\varphi(S)$ in the projective space $\P^{n}\supset\C^{n}$ does not contain the hyperplane at infinity $\P^{n}\setminus\C^{n}$.
\end{definition}

For example, every compact countable set $S\subset\C^{n}$ is tame.
Tameness for discrete sets was first introduced by Rosay and Rudin \cite{Rosay1988} and recently generalized to other complex manifolds \cite{Andrist2019,Winkelmanna,Winkelmann2019}.
Rosay and Rudin also constructed a discrete set in $\C^{n}\ (n>1)$ whose complement is not Oka \cite[Theorem 4.5]{Rosay1988}.

In the present paper, we study complements of tame closed countable sets $S\subset\C^n$ which are \emph{not necessarily discrete}.
That is, we do not assume that the derived set $S'$ (the set of accumulation points of $S$) is empty.
Our main theorem is the following generalization of the result for tame discrete sets.

\begin{theorem}
\label{theorem:main}
For any tame closed countable set $S\subset\C^{n}\ (n>1)$ with a discrete derived set, the complement $\C^{n}\setminus S$ is Oka.
\end{theorem}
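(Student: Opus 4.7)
The plan is to verify the convex approximation property (CAP) for $Y=\C^n\setminus S$, which by Forstnerič's characterization theorem is equivalent to $Y$ being Oka. The argument is a careful refinement of the Andersén--Lempert-theoretic scheme by which Forstnerič and Prezelj handled the tame discrete case.

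The first observation is that the derived set $S'$, being a closed subset of the tame set $S$, is itself tame, and it is discrete by hypothesis. The Forstnerič--Prezelj theorem therefore already yields that $\C^n\setminus S'$ is Oka. The remaining task is to further excise the isolated points $S\setminus S'$; as a subset of $\C^n$ these are not closed but accumulate precisely onto $S'$, while as a subset of $\C^n\setminus S'$ they form a closed discrete set.

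Given a holomorphic map $f\colon U\to Y$ from a neighborhood $U$ of a compact convex set $K\subset\C^N$, I would first approximate $f$ uniformly on $K$ by an entire map $g\colon\C^N\to\C^n$ via Oka--Weil, then correct $g$ by an infinite composition of automorphisms of $\C^n$. On a normal exhaustion $K=K_0\Subset K_1\Subset\cdots\nearrow\C^N$, one constructs inductively $\Phi_j\in\Aut\C^n$ so that $g_j:=\Phi_j\circ\cdots\circ\Phi_0\circ g$ satisfies $g_j(K_j)\cap S=\emptyset$ and each $\Phi_j$ is chosen so close to $\id$ on $g_{j-1}(K_{j-1})\cup f(K)$ that the limit $g_\infty=\lim_j g_j$ exists as an entire map into $Y$ approximating $f$ on $K$ to prescribed precision.

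The main obstacle is the inductive step: given compacts $L\Subset L'$ in $\C^n$ with $L\cap S=\emptyset$, produce $\Phi\in\Aut\C^n$ close to $\id$ on $L$ with $\Phi(L')\cap S=\emptyset$. The intersection $L'\cap S$ contains the finitely many accumulation points $p_{1},\dots,p_{m}\in S'\cap L'$ (by discreteness of $S'$) together with an infinite tail of points of $S\setminus S'$ clustering toward each $p_{i}$. To handle this, I would employ a two-step construction: first, using the tameness of $S'$ and the Andersén--Lempert--Forstnerič--Prezelj machinery of automorphisms preserving a tame discrete set setwise, apply an automorphism that moves each $p_{i}$ to another point of $S'$ chosen outside a large compact neighborhood of $L'$; by continuity this drags a full local neighborhood of $p_{i}$ (including the tail of $S\setminus S'$ attached to $p_{i}$) along, so after the move the entire cluster is outside $L'$. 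Then a further small-norm automorphism, close to $\id$ on the new image, handles the finitely many remaining isolated points of $S\setminus S'$ that meet $L'$ but are not attached to any $p_{i}$ inside $L'$. The delicate technical core is coordinating these two stages so that $\Phi$ remains close to $\id$ on $L$ throughout and does not reintroduce intersections with $S$ near the boundary of $L$; it is here that the tameness hypothesis is used in full strength, going beyond what is required in the discrete case of Forstnerič--Prezelj.
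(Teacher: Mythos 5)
Your reduction to the inductive step --- given compacts $L\Subset L'$ in $\C^n$ with $L\cap S=\emptyset$, produce $\Phi\in\Aut\C^{n}$ close to $\id$ on $L$ with $\Phi(L')\cap S=\emptyset$ --- is where the whole proof lives, and the two-stage construction you sketch for it does not go through. First, stage one asks to move each accumulation point $p_{i}\in S'\cap L'$ ``to another point of $S'$ chosen outside a large compact neighborhood of $L'$''; this is already impossible for the basic example $S=\overline{\N^{-1}}\times\{0\}^{n-1}$, where $S'$ is the single point $0$, so $S'$ has no points outside a compact set. (Nothing forces the image of $p_i$ to lie in $S'$, of course, but then the appeal to machinery of ``automorphisms preserving a tame discrete set setwise'' does no work, and you are left with the bare assertion that the required automorphism exists.) Second, stage two cannot be performed by an automorphism uniformly close to $\id$ on the relevant compact: if one of the finitely many surviving points $q\in S$ lies in a ball $\B^{n}(q,r)$ contained in $\Phi_{1}(L')$, then any $\Phi_{2}$ with $\sup_{\overline{\B^{n}(q,r)}}|\Phi_{2}-\id|<r$ still satisfies $q\in\Phi_{2}(\B^{n}(q,r))$ by a Rouch\'e/degree argument, so $\Phi_{2}$ must be large somewhere on $\Phi_{1}(L')$ --- exactly the regime in which you lose control of whether $\Phi_{2}(\Phi_{1}(L'))$ re-enters the closed set $S$. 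You flag this coordination problem as ``the delicate technical core'' but supply no mechanism for it, and it is not contained in Forstneri\v{c}--Prezelj: their theorem says the complement of a tame discrete set is Oka, not that such avoidance can be realized by global automorphisms close to the identity on prescribed compacts. A lesser point: your opening observation that $\C^{n}\setminus S'$ is Oka is correct but is never used --- there is no general theorem allowing one to delete a further discrete set from an Oka manifold.

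For contrast, the paper sidesteps global automorphism gymnastics entirely. By the localization principle (Theorem \ref{theorem:localization}) it suffices to give each point $p\notin S$ a Zariski open Oka neighborhood. A single explicit automorphism $(z',z_{n})\mapsto(z',e^{f(z')}(z_{n}+g(z')))$, built from the Oka--Cartan extension theorem and Lemma \ref{lemma:approximation}, flattens $S'$ into the hyperplane $z_{n}=0$ while keeping $p$ off it and compresses the remaining points of $S$ so strongly toward that hyperplane that their preimage under $\id_{\C^{n-1}}\times\exp$ becomes a tame \emph{discrete} set; the known discrete case then applies to the universal cover of $(\C^{n-1}\times\C^{*})\setminus\varphi(S)$, which is the desired Zariski open Oka neighborhood. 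If you want to salvage an approximation-theoretic proof, you would need to first establish something like this structural normalization of $S$; the push-out scheme as written has a genuine gap at its central step.
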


In the same manner, we may prove the following theorem for blowups.

\begin{theorem}
\label{theorem:blowup}
For any tame closed countable set $S\subset\C^{n}\ (n>1)$ with a discrete derived set $S'$, the blowup $\Bl_{S\setminus S'}(\C^{n}\setminus S')$ of $\C^{n}\setminus S'$ along $S\setminus S'$ is Oka.
\end{theorem}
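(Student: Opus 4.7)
The plan is to mirror the proof of Theorem~\ref{theorem:main} and to exploit the fact that blowing up the discrete-in-$\C^{n}\setminus S'$ set $S\setminus S'$ is a local modification that is invisible away from the blowup centers. The underlying idea is that the techniques used in the proof of Theorem~\ref{theorem:main} should produce, essentially for free, the stronger statement in which uniform approximation on a compact convex set is accompanied by jet interpolation of prescribed order at a prescribed finite subset of $S\setminus S'$. This strengthened statement is precisely the input needed to lift entire approximations from $\C^{n}\setminus S'$ to its blowup.

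First I would use tameness to normalize $S$. Choosing $\varphi\in\Aut\C^{n}$ so that the closure of $\varphi(S)$ in $\P^{n}$ avoids the hyperplane $\P^{n}\setminus\C^{n}$, the induced biholomorphism $\varphi\colon\C^{n}\setminus S'\to\C^{n}\setminus\varphi(S')$ sends $S\setminus S'$ bijectively onto $\varphi(S)\setminus\varphi(S')$ and hence lifts to a biholomorphism between the corresponding blowups. So we may assume $S$ is already in normalized tame position.

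To verify that $X:=\Bl_{S\setminus S'}(\C^{n}\setminus S')$ is Oka, I would check whichever convex approximation characterization (for instance $\CAP$, or one of the variants $\BOPA$, $\POPA$ appearing in the paper) is used in the proof of Theorem~\ref{theorem:main}. Let $f$ be a holomorphic map from a neighborhood of a compact convex set $K\subset\C^{N}$ into $X$, and let $\pi\colon X\to\C^{n}\setminus S'$ be the blowdown. Only finitely many points $p_{1},\ldots,p_{r}\in S\setminus S'$ lie in the image of $\pi\circ f$ on a small neighborhood of $K$, and above each $p_{i}$ the map $f$ determines a finite jet of $\pi\circ f$ at each preimage in $K$. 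The plan is to approximate $\pi\circ f$ uniformly on $K$ by an entire holomorphic map $\tilde g\colon\C^{N}\to\C^{n}\setminus S'$ that agrees with $\pi\circ f$ to sufficiently high order at each such preimage; the universal property of the blowup then yields a unique lift $\tilde f\colon\C^{N}\to X$ approximating $f$ on $K$.

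The main obstacle is producing this approximation-with-jet-interpolation statement for entire maps into $\C^{n}\setminus S'$. This is the natural strengthening of Theorem~\ref{theorem:main}, and it should follow from the same argument: the sprays and shear-type automorphisms of $\C^{n}$ whose construction from tameness drives the proof of Theorem~\ref{theorem:main} can be arranged to vanish to any prescribed order at a fixed finite subset of $S\setminus S'$, so that the inductive approximation step which produces $\tilde g$ from $\pi\circ f$ preserves the jet data at $p_{1},\ldots,p_{r}$. Granting this strengthening, the lift $\tilde f$ exists and approximates $f$, which establishes the required convex approximation property for $X$ and hence the content of the author's remark that Theorem~\ref{theorem:blowup} can be proved \emph{in the same manner}.
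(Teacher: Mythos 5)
Your proposal takes a genuinely different route from the paper, and it has real gaps. The paper does not verify a convex approximation property for the blowup directly: it applies the localization principle (Theorem \ref{theorem:localization}), reusing the automorphism $\varphi$ constructed in the proof of Theorem \ref{theorem:main} so that $(\id_{\C^{n-1}}\times\exp)^{-1}(\varphi(S))$ is a tame \emph{discrete} set in $\C^{n}$, and then quotes the known fact that the blowup of $\C^{n}$ along a tame discrete set is Oka; the Zariski open set $\pi^{-1}(\varphi^{-1}(\C^{n-1}\times\C^{*}))$ is an Oka neighborhood of the given point because it is covered by that blowup. Your premise that the proof of Theorem \ref{theorem:main} is driven by ``sprays and shear-type automorphisms'' that can be arranged to vanish to prescribed order at points of $S\setminus S'$ misdescribes that proof: it constructs a single automorphism and then reduces to a covering-space argument, so there is no approximation-with-interpolation machinery there to strengthen ``essentially for free.''

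More importantly, the lifting step does not work as stated. First, for $p_{i}\in S\setminus S'$ the set $(\pi\circ f)^{-1}(p_{i})=f^{-1}(E_{i})$ is the preimage of the exceptional divisor, hence generically a hypersurface in a neighborhood of $K$ rather than a finite set, so jet interpolation at a prescribed finite subset is not the right condition. Second, and decisively, the universal property of the blowup produces a lift of $\tilde g\colon\C^{N}\to\C^{n}\setminus S'$ only where the inverse image ideal sheaf of $S\setminus S'$ is invertible, and this is a \emph{global} condition on $\C^{N}$. Your $\tilde g$ is only required to avoid $S'$, so on $\C^{N}\setminus K$ nothing prevents it from meeting a point of $S\setminus S'$ transversally (possible as soon as $N\geq n$), where the pulled-back maximal ideal needs $n\geq 2$ generators and is not principal; no lift $\tilde f\colon\C^{N}\to X$ exists through such a point. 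Approximation on $K$ together with interpolation over $K$ gives no control over this. This is precisely the difficulty the paper sidesteps by localizing first and invoking the established Oka property of blowups of $\C^{n}$ along tame discrete sets.
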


The proofs of Theorem \ref{theorem:main} and Theorem \ref{theorem:blowup} are given in Section \ref{section:proof_theorem}.
These are new applications of the localization principle for Oka manifolds (Theorem \ref{theorem:localization}) which was established in our previous paper \cite{Kusakabe2019}.

In Section \ref{section:proof_corollary}, we prove the following two corollaries.
First, we give an example of a non-elliptic Oka manifold.
\emph{Ellipticity} was introduced by Gromov in his seminal paper \cite{Gromov1989} in 1989 (see Definition \ref{definition:elliptic}).
It is a consequence of his main result in \cite{Gromov1989} that ellipticity is a sufficient condition for manifolds to be Oka.
In the same paper, he also proved the converse for Stein manifolds \cite[Remark 3.2.A]{Gromov1989}\footnote{The condition $\Ell_\infty$ in \cite[Remark 3.2.A]{Gromov1989} is equivalent to being Oka (cf. \cite[\S 5.15]{Forstneric2017}).}.
Then he asked a question whether the converse holds for \emph{all} complex manifolds \cite[Question 3.2.A$''$]{Gromov1989}.
Decades later, Andrist, Shcherbina and Wold \cite{Andrist2016} showed that in a Stein manifold of dimension at least three every compact holomorphically convex\footnote{The holomorphic convexity was not assumed in \cite{Andrist2016} but used in the proof implicitly.} set with an infinite derived set has a non-elliptic complement.
However, there has been no example of an Oka complement of this type.
As an application of Theorem \ref{theorem:main} and the localization principle for Oka manifolds (Theorem \ref{theorem:localization}), we obtain such an example which negatively answers the long-standing question of Gromov.

\begin{corollary}
\label{corollary:non-elliptic}
For any $n\geq 3$, the complement $\C^{n}\setminus((\overline{\N^{-1}})^{2}\times\{0\}^{n-2})$ is a non-elliptic Oka manifold where $\N^{-1}=\{j^{-1}:j\in\N\}\subset\C$.
\end{corollary}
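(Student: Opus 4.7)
The corollary consists of two separate assertions, which I treat in turn.

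For \emph{non-ellipticity}, apply the Andrist--Shcherbina--Wold theorem from \cite{Andrist2016} recalled in the introduction. The set $S:=(\overline{\N^{-1}})^{2}\times\{0\}^{n-2}\subset\C^{n}$ is compact and countable, hence polynomially convex in $\C^{n}$ and in particular holomorphically convex. Its derived set
\[
S'=\bigl(\{0\}\times\overline{\N^{-1}}\cup\overline{\N^{-1}}\times\{0\}\bigr)\times\{0\}^{n-2}
\]
is infinite. Since $n\geq 3$, the cited theorem yields that $\C^{n}\setminus S$ is not elliptic (and, as claimed in the abstract, not even weakly subelliptic).

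For the \emph{Oka property}, Theorem~\ref{theorem:main} does not apply directly, since $S'$ accumulates at $0$ and hence fails to be discrete. The plan is to combine Theorem~\ref{theorem:main} with the localization principle (Theorem~\ref{theorem:localization}) by exhibiting a Zariski open cover of $\C^{n}\setminus S$ whose pieces are Oka. A natural three-piece cover is $\{U_{1},U_{2},U_{3}\}$, where $U_{1}=\C^{n}\setminus\{z_{1}=0\}$, $U_{2}=\C^{n}\setminus\{z_{2}=0\}$, and $U_{3}=\C^{n}\setminus(\C^{2}\times\{0\}^{n-2})$. Their union is $\C^{n}\setminus\{0\}\supseteq\C^{n}\setminus S$ (since $0\in S$). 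The piece $U_{3}\cong\C^{2}\times(\C^{n-2}\setminus\{0\})$ is disjoint from $S$ and is Oka as a product of Oka manifolds (here $n\geq 3$ is needed so that $\C^{n-2}\setminus\{0\}$ is Oka).

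For $U_{1}\setminus S$ (and symmetrically $U_{2}\setminus S$): the trace $U_{1}\cap S=\N^{-1}\times\overline{\N^{-1}}\times\{0\}^{n-2}$ has derived set $\N^{-1}\times\{0\}^{n-1}$ in $U_{1}$, which is discrete there because its only possible accumulation point $0$ has been removed together with the hyperplane $\{z_{1}=0\}$. Under the biholomorphism $(z_{1},z')\mapsto(z_{1}^{-1},z')$ of $U_{1}\cong\C^{*}\times\C^{n-1}\subset\C^{n}$, the trace is identified with $T:=\N\times\overline{\N^{-1}}\times\{0\}^{n-2}\subset\C^{n}$, which is tame in $\C^{n}$ (its closure in $\P^{n}$ accumulates only at the single point at infinity $[0:1:0:\cdots:0]$, not on the hyperplane at infinity) and has discrete derived set $\N\times\{0\}^{n-1}$. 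Theorem~\ref{theorem:main} yields that $\C^{n}\setminus T$ is Oka, and $U_{1}\setminus S$ corresponds to the Zariski open subset $(\C^{n}\setminus T)\cap\{z_{1}\neq 0\}$. A second application of the localization principle to this Zariski open subset (constructing a finer Zariski open cover by pieces of the form $\C^{n}\setminus T'$ with $T'$ tame of discrete derived set) yields that $U_{1}\setminus S$ is Oka. Theorem~\ref{theorem:localization} applied to $\{U_{1}\setminus S,\,U_{2}\setminus S,\,U_{3}\}$ then concludes that $\C^{n}\setminus S$ is Oka.

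The main obstacle is the passage from Oka-ness of $\C^{n}\setminus T$ to Oka-ness of its Zariski open subset $U_{1}\setminus S=(\C^{n}\setminus T)\cap\{z_{1}\neq 0\}$; this is precisely the transition that the localization principle of \cite{Kusakabe2019} is designed to handle, and the construction of the required finer Zariski open cover (possibly via iterated excisions of hyperplanes to produce tame sets with discrete derived sets) is the only nontrivial ingredient beyond Theorem~\ref{theorem:main}.
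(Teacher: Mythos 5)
Your non-ellipticity argument is fine and is essentially the paper's: you correctly supply the polynomial convexity of $S=(\overline{\N^{-1}})^{2}\times\{0\}^{n-2}$ (needed because holomorphic convexity, though omitted from the statement of \cite[Theorem 1.1]{Andrist2016}, is used in its proof) and note that the derived set is infinite, so the Andrist--Shcherbina--Wold result applies for $n\geq 3$. The paper upgrades this to non-weak-subellipticity via Lemma \ref{lemma:non-elliptic}, but for the corollary as stated your citation suffices. Your Zariski open cover of $\C^{n}\setminus S$ also matches the paper's (your $U_{3}$ is the union of the paper's charts $\{z_{j}\neq 0\}$, $j\geq 3$).

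The gap is exactly where you place it, and it is genuine: you cannot get from Oka-ness of $\C^{n}\setminus T$ to Oka-ness of the Zariski open subset $(\C^{n}\setminus T)\cap\{z_{1}\neq 0\}$ by ``a second application of the localization principle.'' Theorem \ref{theorem:localization} runs in the opposite direction --- from the existence of Zariski open Oka neighborhoods of every point to global Oka-ness --- and gives no way to restrict an Oka manifold to a Zariski open piece. Moreover, the finer cover you propose cannot exist in the form described: every Zariski open subset of $U_{1}\setminus S$ lies in $\{z_{1}\neq 0\}$, so it is never of the form $\C^{n}\setminus T'$ with $T'$ a countable set. The missing idea is a covering space argument, which is how the paper handles this chart. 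Since $U_{1}=\C^{*}\times\C^{n-1}$ is covered by $\C^{n}$ via $\pi=\exp\times\id_{\C^{n-1}}$, and a manifold is Oka if and only if a covering space of it is Oka \cite[Proposition 5.6.3]{Forstneric2017}, it suffices to show that $\C^{n}\setminus\pi^{-1}(S)$ is Oka. Here $\pi^{-1}(S)=\exp^{-1}(\N^{-1})\times\overline{\N^{-1}}\times\{0\}^{n-2}$ is a tame closed countable set (its closure in $\P^{n}$ meets the hyperplane at infinity in a single point) with discrete derived set $\exp^{-1}(\N^{-1})\times\{0\}^{n-1}$, so Theorem \ref{theorem:main} applies upstairs. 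Replacing your inversion step by this covering argument closes the gap; the rest of your proof then goes through.
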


There is a weaker variant of ellipticity called \emph{weak subellipticity} (see Definition \ref{definition:elliptic}) which is also a sufficient condition to be Oka (cf. \cite[Corollary 5.6.14]{Forstneric2017}).
In fact, we prove that the complement in Corollary \ref{corollary:non-elliptic} is not even weakly subelliptic (Corollary \ref{corollary:non-weakly_subelliptic}).
It should be mentioned that a still weaker variant, called \emph{Condition $\Ell_{1}$}, characterizes Oka manifolds \cite[Theorem 1.3]{Kusakabe2019}.
The localization principle for Oka manifolds (Theorem \ref{theorem:localization}) was proved as a corollary of this characterization.

It is also a fundamental problem whether any point in an Oka manifold of dimension at least two has an Oka complement and an Oka blowup.
As another application of our results, we solve this problem for Hopf manifolds (in particular, we solve \cite[Problem 2.42]{Forstneric2013}).
Recall that a complex manifold $Y$ is a \emph{Hopf manifold} if it is compact and universally covered by $\C^{\dim Y}\setminus\{0\}$.
The latter condition implies that a Hopf manifold is Oka (cf. \cite[Corollary 5.6.11]{Forstneric2017}) and the dimension of a Hopf manifold must be greater than $1$.

\begin{corollary}
\label{corollary:Hopf}
For any Hopf manifold $Y$ and any finite set $S\subset Y$, the complement $Y\setminus S$ and the blowup $\Bl_{S}Y$ are Oka.
\end{corollary}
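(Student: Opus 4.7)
The plan is to lift the finite set $S$ along the universal cover $\pi : \C^{n}\setminus\{0\} \to Y$, where $n = \dim Y > 1$, and apply Theorems~\ref{theorem:main} and~\ref{theorem:blowup} to the lifted set, then descend via the localization principle.

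First, let $\Gamma \subset \Aut(\C^{n}\setminus\{0\})$ denote the deck transformation group of $\pi$. By Hartogs' extension, every element of $\Gamma$ extends to an automorphism of $\C^{n}$ fixing $0$, and compactness of $Y$ forces $\Gamma$ to contain a holomorphic contraction $\gamma_{0}$. Setting $\tilde{S} := \pi^{-1}(S)$ and $\tilde{T} := \tilde{S} \cup \{0\}$, I would observe that $\tilde{S}$ is a countable $\Gamma$-invariant subset of $\C^{n}\setminus\{0\}$, discrete there, whose only accumulation point in $\C^{n}$ is $0$; hence $\tilde{T}$ is a closed countable subset of $\C^{n}$ with discrete derived set $\tilde{T}' = \{0\}$.

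The main obstacle will be to verify that $\tilde{T}$ is tame. To this end I would invoke the classification of Hopf manifolds (Kodaira for $n=2$, Kato in general): $\Gamma$ is virtually infinite cyclic with its infinite-cyclic factor generated by a contraction, and, after conjugating $\Gamma$ by an element of $\Aut\C^{n}$, this generator $\gamma_{0}$ may be placed in Poincar\'{e}--Dulac polynomial normal form. Then $\tilde{S}$ is a finite union of orbits $\{\gamma_{0}^{k}(z_{i})\}_{k\in\Z}$ with $i=1,\dots,|S|$, each of which accumulates at $0$ forwards and escapes to infinity backwards along a direction prescribed by the top expanding eigenspace of $\gamma_{0}$ at $0$; the projectivizations $[\gamma_{0}^{-k}(z_{i})] \in \P^{n-1}$ converge to a finite subset of $\P^{n-1}$. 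Hence the closure of $\tilde{T}$ in $\P^{n}$ meets the hyperplane at infinity in only finitely many points, so $\tilde{T}$ is tame with $\varphi = \id$.

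With tameness in hand, Theorem~\ref{theorem:main} yields that $\C^{n}\setminus\tilde{T} = (\C^{n}\setminus\{0\})\setminus\tilde{S}$ is Oka, and Theorem~\ref{theorem:blowup} yields that $\Bl_{\tilde{S}}(\C^{n}\setminus\{0\}) = \Bl_{\tilde{T}\setminus\tilde{T}'}(\C^{n}\setminus\tilde{T}')$ is Oka. Since $\Gamma$ preserves $\tilde{S}$ and acts freely on $\C^{n}\setminus\{0\}$, the action lifts to a free action on the blowup, and the quotient maps $(\C^{n}\setminus\{0\})\setminus\tilde{S} \to Y\setminus S$ and $\Bl_{\tilde{S}}(\C^{n}\setminus\{0\}) \to \Bl_{S}Y$ are unramified holomorphic coverings. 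Because being Oka is a local property (Theorem~\ref{theorem:localization}) and coverings are locally biholomorphic, both $Y\setminus S$ and $\Bl_{S}Y$ inherit the Oka property, completing the argument.
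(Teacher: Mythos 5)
Your overall architecture --- lift $S$ to the covering $\C^{n}\setminus\{0\}$, check that the lifted set together with the origin is a tame closed countable set with derived set $\{0\}$, apply Theorems \ref{theorem:main} and \ref{theorem:blowup}, and descend --- is exactly the paper's. However, two of your steps are genuinely problematic.

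First, the tameness verification. You claim that the projectivized backward orbits $[\gamma_{0}^{-k}(z_{i})]$ converge in $\P^{n-1}$, so that the closure of $\tilde{T}$ at infinity is finite. This is false in general: already for a linear diagonal contraction with two eigenvalues of equal modulus whose ratio is not a root of unity, $[\gamma_{0}^{-k}(z)]=[z_{1}:(\lambda_{1}/\lambda_{2})^{k}z_{2}:\cdots]$ is dense in a circle, so the limit set at infinity is uncountable. One could still try to argue that a finite union of such limit sets cannot contain the whole hyperplane at infinity, but that commits you to analyzing arbitrary Poincar\'e--Dulac normal forms (Jordan blocks, resonant polynomial terms), which your sketch does not do. The paper sidesteps this entirely: it first passes to the finite cover $(\C^{n}\setminus\{0\})/\langle\varphi\rangle$ furnished by Kodaira's argument, puts the contraction $\varphi$ in lower triangular form, observes that the discrete set $\pi^{-1}(S)\setminus\overline{\B^n(0,1)}$ projects to a discrete set in the first coordinate and is therefore tame by Rosay--Rudin's criterion \cite[Theorem 3.9]{Rosay1988}, and notes that the remaining part $(\pi^{-1}(S)\cap\overline{\B^n(0,1)})\cup\{0\}$ is compact, hence harmless for tameness.

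Second, the descent. ``Being Oka is a local property and coverings are locally biholomorphic'' is not a valid argument: Theorem \ref{theorem:localization} requires \emph{Zariski} open Oka neighborhoods, and a covering map only provides ordinary open neighborhoods biholomorphic to open subsets of an Oka manifold, which need not be Oka (a small ball is not). The correct and standard tool, which the paper uses, is that the Oka property passes up and down along unramified holomorphic covering maps \cite[Proposition 5.6.3]{Forstneric2017}; the same statement applied to the induced covering of blowups handles $\Bl_{S}Y$.
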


%%%%%%%%%%%%%%%%%%%%%%%%%%%%%%%%%%%%%%%%%%
%
% Proofs of the Theorems
%
%%%%%%%%%%%%%%%%%%%%%%%%%%%%%%%%%%%%%%%%%%

\section{Proofs of the Theorems}
\label{section:proof_theorem}

Recall the following localization principle for Oka manifolds.
Here, a subset of $Y$ is Zariski open if its complement is a closed complex subvariety.

\begin{theorem}[{\cite[Theorem 1.4]{Kusakabe2019}}]
\label{theorem:localization}
Let $Y$ be a complex manifold.
Assume that each point of $Y$ has a Zariski open Oka neighborhood.
Then $Y$ is an Oka manifold.
\end{theorem}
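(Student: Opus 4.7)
The plan is to deduce this from the pointwise characterization of the Oka property established in \cite[Theorem 1.3]{Kusakabe2019}, namely Condition $\Ell_{1}$. While the Oka property itself is a global approximation statement, Condition $\Ell_{1}$ is formulated locally at individual source points, and is therefore inherited from Zariski open Oka subsets in an essentially tautological way.

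First I would state the characterization in a convenient form: $Y$ is Oka if and only if for every holomorphic map $f\colon\Op(K)\to Y$ from a neighborhood of a compact convex set $K\subset\C^{m}$, $f$ admits, over a neighborhood of some point $z_{0}\in K$, a dominating holomorphic spray $F\colon\Op(z_{0})\times\C^{N}\to Y$ with $F(\cdot,0)=f|_{\Op(z_{0})}$ and $\partial_{w}F(z_{0},0)\colon\C^{N}\to T_{f(z_{0})}Y$ surjective. The role of Condition $\Ell_{1}$ is precisely to reduce the global Oka property to a family of such local dominability statements around individual points of $Y$.

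Next I would verify Condition $\Ell_{1}$ for $Y$. Given such a map $f$, pick any $z_{0}\in K$ and a Zariski open Oka neighborhood $V\ni f(z_{0})$ provided by the hypothesis. Since $Y\setminus V$ is a closed complex subvariety of $Y$, its preimage under $f$ is a proper closed analytic subset of the source that misses $z_{0}$, so $f^{-1}(V)$ is an open neighborhood of $z_{0}$. On a small ball about $z_{0}$ the map $f$ takes values in the Oka manifold $V$; Condition $\Ell_{1}$ applied to $V$ produces the required dominating spray valued in $V$, and composition with the open inclusion $V\hookrightarrow Y$ gives the desired spray into $Y$. Since $f$ and $z_{0}$ were arbitrary, Condition $\Ell_{1}$ holds for $Y$, and the characterization then delivers the Oka property.

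The substantive obstacle, which is the content of \cite[Theorem 1.3]{Kusakabe2019} rather than of the localization step itself, is the characterization: proving that the very weak local condition $\Ell_{1}$ already implies the Convex Approximation Property, and hence the Oka property. Once that equivalence is granted, the localization argument above is essentially formal --- Zariski openness is invoked only to make $f^{-1}(V)$ open, while the Oka-ness of $V$ enters purely through a single pointwise instance of $\Ell_{1}$.
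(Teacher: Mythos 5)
First, note that the paper does not prove this statement at all: it is imported verbatim from \cite[Theorem 1.4]{Kusakabe2019}, so the only comparison available is with the proof given there, which (as the paper itself remarks in the introduction) derives the localization principle as a corollary of the Condition $\Ell_{1}$ characterization \cite[Theorem 1.3]{Kusakabe2019}. Your overall strategy therefore matches the intended one. However, your execution contains a genuine gap, and it stems from a misstatement of Condition $\Ell_{1}$. The condition is \emph{not} that $f$ admits a dominating spray over a neighborhood of \emph{some (or each) single point} $z_{0}\in K$; it is that $f$ admits a single holomorphic spray $F\colon V\times\C^{N}\to Y$ defined on a neighborhood $V$ of the \emph{entire} compact convex set $K$, with $F(\cdot,0)=f|_{V}$ and $\partial_{w}F(z,0)$ surjective for \emph{every} $z\in K$. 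With the correct statement, your argument breaks down exactly where the real work begins: a Zariski open Oka neighborhood $V\ni f(z_{0})$ only yields a spray where $f$ maps into $V$, i.e.\ off the analytic set $f^{-1}(Y\setminus V)$, and this does not dominate over all of $K$. The pointwise-in-the-source condition you formulate is far weaker than $\Ell_{1}$ (for constant maps it amounts to mere dominability of $Y$ at each point), and there is no reason it should imply the Oka property.

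The missing content is the patching step: one covers the compact set $f(K)$ by finitely many Zariski open Oka sets $V_{1},\ldots,V_{m}$, obtains for each $i$ a dominating spray over $f$ with values in $V_{i}$ defined off the subvariety $A_{i}=f^{-1}(Y\setminus V_{i})$, rescales the fiber variable by a holomorphic function vanishing to high order on $A_{i}$ so that the (now degenerate along $A_{i}$) spray extends across $A_{i}$, and then composes the resulting finitely many sprays in the sense of Gromov so that the composition dominates at every point of $K$ (at each point at least one factor dominates, since the $A_{i}$ have empty common intersection over $K$). This also corrects your closing remark about the role of Zariski openness: it is not invoked merely to make $f^{-1}(V)$ open (ordinary openness suffices for that), but essentially, to guarantee that $f^{-1}(Y\setminus V_{i})$ is an analytic subvariety of the Stein source, which is what permits the degenerate extension of the local sprays; indeed, the analogous localization statement for merely open Oka neighborhoods is not known.
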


We also need the following approximation theorem.
Let $\overline{\B^{n}(a,r)}$ denote the closed ball in $\C^{n}$ of radius $r>0$ centered at $a\in\C^{n}$.

\begin{lemma}
\label{lemma:approximation}
For any discrete sequence $\{a_{j}\}_{j}$ in $\C^{n}$ there exists a sequence $\{r_j\}_{j}$ of positive numbers such that
\begin{enumerate}
	\item the closed balls $\overline{\B^{n}(a_{j},r_{j})}$ are mutually disjoint, and
	\item for any holomorphic functions $f_{j}\in\cO(\overline{\B^{n}(a_{j},r_{j})})$ and any sequence $\{\varepsilon_{j}\}_{j}$ of positive numbers there exists a holomorphic function $f\in\cO(\C^n)$ such that $\sup_{\overline{\B^{n}(a_{j},r_{j})}}|f-f_{j}|\leq\varepsilon_j$ for all $j$.
\end{enumerate}
\end{lemma}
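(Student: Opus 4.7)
The finite case is immediate from Oka--Weil applied to a single polynomially convex compact set (a finite disjoint union of small closed balls in $\C^n$ is polynomially convex, as explained below), so assume $\{a_j\}$ is infinite; then $|a_j|\to\infty$ by discreteness, and we reindex so that $|a_1|\leq|a_2|\leq\cdots$. The plan is to build, jointly and by induction on $j$, radii $r_j>0$ and an increasing sequence $M_j\to\infty$ such that (i) the closed balls $\overline{\B^{n}(a_j,r_j)}$ are mutually disjoint; (ii) each previous ball $\overline{\B^{n}(a_k,r_k)}$ $(k<j)$ lies inside $\B^{n}(0,M_j)$, while $\overline{\B^{n}(a_j,r_j)}$ is disjoint from $\overline{\B^{n}(0,M_j)}$; and (iii) the compact set $K_j:=\overline{\B^{n}(0,M_j)}\sqcup\overline{\B^{n}(a_j,r_j)}$ is polynomially convex in $\C^n$.

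For (iii), in the generic case of strict inequalities $|a_j|<|a_{j+1}|$, the linear functional $\ell_j(z):=\langle z,a_j/|a_j|\rangle$ maps the two pieces of $K_j$ into the closed disks $\overline{\B^{1}(0,M_j)}$ and $\overline{\B^{1}(|a_j|,r_j)}$ in $\C$, which are disjoint by (ii); Runge's theorem in one variable separates these disks by polynomials, and pulling back via $\ell_j$ yields the polynomial convexity of $K_j$. Coincidences in the norms $|a_j|$ are handled by processing all such balls in a single inductive step and separating each pair of new balls by a further linear functional aligned with the difference of their centers, in the style of Kallin's lemma.

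Given data $\{f_j\}$ and $\{\varepsilon_j\}$, I would then construct $f\in\cO(\C^n)$ as the uniform-on-compacta limit of entire functions $g_j$ built recursively: set $g_0\equiv 0$, and, given $g_{j-1}$, the function $h_j$ defined by $h_j:=g_{j-1}$ near $\overline{\B^{n}(0,M_j)}$ and $h_j:=f_j$ near $\overline{\B^{n}(a_j,r_j)}$ is holomorphic in a neighborhood of the polynomially convex set $K_j$, so Oka--Weil yields $g_j\in\cO(\C^n)$ with $\sup_{K_j}|g_j-h_j|\leq\eta_j$, where $\eta_j:=2^{-j-1}\min\{\varepsilon_1,\ldots,\varepsilon_j\}$. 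Because $M_j\to\infty$, every compact subset of $\C^n$ is eventually absorbed into $\overline{\B^{n}(0,M_j)}$, so $(g_j)$ is Cauchy on compacts and converges to some $f\in\cO(\C^n)$; the telescoping estimate $\sup_{\overline{\B^{n}(a_k,r_k)}}|f-f_k|\leq\sum_{j\geq k}\eta_j\leq\varepsilon_k$ then follows by the triangle inequality. The technical crux throughout is the polynomial convexity of each $K_j$, which is what the shell-based choice of $(r_j,M_j)$ is designed to secure; once this is in place, the Oka--Weil iteration and error bookkeeping are routine.
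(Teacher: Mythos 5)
Your proposal is correct and follows essentially the same route as the paper's proof: group the points of the discrete sequence according to an exhaustion of $\C^{n}$ by closed balls, shrink the radii $r_{j}$ so that the big ball together with the newly added small balls is polynomially convex, and run an Oka--Weil induction with geometric error control $2^{-j}\min\{\varepsilon_{1},\ldots,\varepsilon_{j}\}$ followed by a telescoping estimate. The only minor wrinkle is your treatment of ties $|a_{j}|=|a_{j+1}|$: a linear functional aligned with the difference of the two new centers separates those two balls from each other but not necessarily from $\overline{\B^{n}(0,M_{j})}$, so it is cleaner to invoke the standard fact that the union of a compact polynomially convex set and a disjoint closed ball is polynomially convex and add the balls one at a time --- the paper itself simply asserts the analogous polynomial convexity of $\overline\B_{R_{l}}\cup\bigcup_{j}\overline{\B^{n}(a_{j},r_{j})}$ without further argument.
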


\begin{proof}
We write $\overline\B_{R}=\overline{\B^{n}(0,R)}$.
Take an increasing sequence $\{R_{l}\}_{l\in\N}$ of positive numbers such that $\lim_{l\to\infty}R_{l}=\infty$ and $\{a_{j}\}_{j}\cap\bigcup_{l\in\N}\partial\overline\B_{R_{l}}=\emptyset$.
We define $\overline\B_{R_{0}}=\emptyset$ for convenience.
Renumbering $\{a_{j}\}_{j}$ if necessary, we may assume that there exists an increasing sequence $0=k_{0}<k_{1}<k_{2}<\cdots$ of integers such that $\{a_{j}\}_{j=k_{l}+1}^{k_{l+1}}\subset\overline\B_{R_{l+1}}\setminus\overline\B_{R_{l}}$ for all $l\geq 0$.
Since $\overline\B_{R_{l}}\cap\{a_{j}\}_{j=k_{l}+1}^{k_{l+1}}=\emptyset$ for each $l$, there exists a sequence $\{r_j\}_{j}$ of small positive numbers such that
\begin{enumerate}[(a)]
	\item the closed balls $\overline{\B^{n}(a_{j},r_{j})}$ are mutually disjoint,
	\item $\bigcup_{j=k_{l}+1}^{k_{l+1}}\overline{\B^{n}(a_{j},r_{j})}\subset\overline\B_{R_{l+1}}\setminus\overline\B_{R_{l}}$ for all $l\geq 0$, and
	\item $\overline\B_{R_{l}}\cup\bigcup_{j=k_{l}+1}^{k_{l+1}}\overline{\B^{n}(a_{j},r_{j})}$ is polynomially convex for each $l\geq 0$.
\end{enumerate}

Let us verify the condition $(2)$.
Let $g_{0}\in\cO(\overline\B_{R_{0}})=\cO(\emptyset)$ be the unique holomorphic function on $\emptyset$.
Assume inductively that $g_{l}\in\cO(\overline\B_{R_{l}})$ has been chosen for some $l\geq 0$.
By the Oka-Weil approximation theorem, there exists a holomorphic function $g_{l+1}\in\cO(\overline\B_{R_{l+1}})$ such that
\begin{enumerate}[(i)]
	\item $\sup_{\overline\B_{R_{l}}}|g_{l+1}-g_{l}|\leq\min\{\varepsilon_{j}\}_{j=1}^{k_{l}}/2^{l+1}$, and
	\item $\sup_{\overline{\B^{n}(a_{j},r_{j})}}|g_{l+1}-f_{j}|\leq\varepsilon_{j}/2$ for all $j=k_{l}+1,\ldots,k_{l+1}$.
\end{enumerate}
Then the limit $f=\lim_{l\to\infty}g_{l}$ exists uniformly on compacts in $\C^{n}$ and has the desired property.
\end{proof}

\begin{proof}[Proof of Theorem \ref{theorem:main}]
By Theorem \ref{theorem:localization}, it suffices to prove that for any fixed point $p\in\C^{n}\setminus S$ there exists a Zariski open Oka neighborhood of $p$.
Since $S$ is tame, there exist a holomorphic coordinate system $z=(z',z_{n})$ on $\C^{n}$ and a constant $C>0$ such that $p=0$ and $S\subset\{(z',z_{n}):|z_{n}|\leq C(1+|z'|)\}$ by definition.
Furthermore, since $S$ is countable, we may also assume that $S\subset(\C^{n-1}\setminus\{0\})\times\C$ and $\pr'|_{S}:S\to\C^{n-1}$ is injective where we denote by $\pr':\C^{n}\to\C^{n-1}$ the projection $z\mapsto z'$.
Note that the restriction of $\pr'$ to $\{(z',z_{n}):|z_{n}|\leq C(1+|z'|)\}$ is proper.
Thus $\pr'(S')$ is discrete in $\C^{n-1}$.
Let us enumerate $\pr'(S')=\{a_{j}\}_{j}$.
Take a sequence $\{r_{j}\}_{j}$ of positive numbers which satisfies the conditions (1) and (2) in Lemma \ref{lemma:approximation}.
Note that $\pr'(S)\setminus\bigcup_{j}\overline{\B^{n-1}(a_{j},r_{j})}\subset\C^{n-1}$ is discrete.
By the Oka-Cartan extension theorem, there exists $g\in\cO(\C^{n-1})$ such that
\begin{enumerate}[(a)]
	\item $g(0)=1$, and
	\item $g(z')=-z_{n}$ for all $(z',z_{n})\in S'\cup (S\setminus\pr'^{-1}(\bigcup_{j}\overline{\B^{n-1}(a_{j},r_{j})}))$.
\end{enumerate}
By the condition (2) in Lemma \ref{lemma:approximation}, there exists $f\in\cO(\C^{n-1})$ such that on each closed ball $\overline{\B^{n-1}(a_{j},r_{j})}$ the real part of $f$ satisfies
\begin{align*}
\Re f\leq\log\frac{1}{j\cdot\sup_{z'\in\overline{\B^{n-1}(a_{j},r_{j})}}(C(1+|z'|)+|g(z')|)}.
\end{align*}
It follows that for all $(z',z_{n})\in S\cap\pr'^{-1}(\overline{\B^{n-1}(a_{j},r_{j})})$
\begin{align*}
\left|e^{f(z')}(z_{n}+g(z'))\right|\leq e^{\Re f(z')}(C(1+|z'|)+|g(z')|)\leq\frac{1}{j}.
\end{align*}

Consider the automorphism $\varphi\in\Aut\C^{n}$ defined by $\varphi(z',z_{n})=(z',e^{f(z')}(z_{n}+g(z')))$.
Note that $\varphi(p)=(0,e^{f(0)})\in\C^{n-1}\times\C^{*}$, $\varphi(S')\subset\C^{n-1}\times\{0\}$ and the discrete set $D=\varphi(S)\cap(\C^{n-1}\times\C^{*})$ in $\C^{n-1}\times\C^{*}$ is contained in $\bigcup_{j}(\overline{\B^{n-1}(a_{j},r_{j})}\times\overline{\B^{1}(0,1/j)})$.
Thus $\pr_{n}(D)$ is discrete in $\C^{*}$ where $\pr_{n}:\C^{n}\to\C$ is the $n$-th projection, and hence $\pr_{n}((\id_{\C^{n-1}}\times\exp)^{-1}(D))=\exp^{-1}(\pr_{n}(D))$ is discrete in $\C$.
This implies that $(\id_{\C^{n-1}}\times\exp)^{-1}(D)$ is a tame discrete set in $\C^{n}$ (cf. \cite[Theorem 3.9]{Rosay1988}).
Therefore $(\C^{n-1}\times\C^{*})\setminus\varphi(S)$ is a Zariski open Oka neighborhood of $\varphi(p)$ because its universal covering $\C^{n}\setminus(\id_{\C^{n-1}}\times\exp)^{-1}(D)$ is Oka (cf. \cite[Proposition 5.6.3]{Forstneric2017}).
It follows that the preimage $\varphi^{-1}(\C^{n-1}\times\C^{*})\setminus S\subset\C^{n}\setminus S$ is a Zariski open Oka neighborhood of $p$.
\end{proof}

\begin{proof}[Proof of Theorem \ref{theorem:blowup}]
Let $\pi:\Bl_{S\setminus S'}(\C^{n}\setminus S')\to\C^{n}\setminus S'$ denote the blowup of $\C^{n}\setminus S'$ along $S\setminus S'$.
As before, it suffices to prove that for any fixed point $p\in\Bl_{S\setminus S'}(\C^{n}\setminus S')$ there exists a Zariski open Oka neighborhood of $p$.
The argument in the proof of Theorem \ref{theorem:main} gives a holomorphic automorphism $\varphi\in\Aut\C^{n}$ such that $\varphi(\pi(p))\in\C^{n-1}\times\C^{*}$, $\varphi(S')\subset\C^{n-1}\times\{0\}$ and the discrete set $(\id_{\C^{n-1}}\times\exp)^{-1}(\varphi(S))\subset\C^{n}$ is tame.
Since the blowup of $\C^{n}$ along a tame discrete set is Oka (cf. \cite[Proposition 6.4.12]{Forstneric2017}), the blowup $\Bl_{(\id_{\C^{n-1}}\times\exp)^{-1}(\varphi(S))}\C^{n}$ is Oka.
Therefore the blowup $\Bl_{\varphi(S)\cap(\C^{n-1}\times\C^{*})}(\C^{n-1}\times\C^{*})$ covered by $\Bl_{(\id_{\C^{n-1}}\times\exp)^{-1}(\varphi(S))}\C^{n}$ is also Oka.
It follows that
\begin{align*}
\pi^{-1}(\varphi^{-1}(\C^{n-1}\times\C^{*}))=\Bl_{S\cap\varphi^{-1}(\C^{n-1}\times\C^{*})}\varphi^{-1}(\C^{n-1}\times\C^{*})\subset\Bl_{S\setminus S'}(\C^{n}\setminus S')
\end{align*}
is a Zariski open Oka neighborhood of $p$.
\end{proof}

%%%%%%%%%%%%%%%%%%%%%%%%%%%%%%%%%%%%%%%%%%
%
% Proofs of the Corollaries
%
%%%%%%%%%%%%%%%%%%%%%%%%%%%%%%%%%%%%%%%%%%

\section{Proofs of the Corollaries}
\label{section:proof_corollary}

First, let us recall the definitions of ellipticity and its variants.

\begin{definition}[{cf. \cite[Definition 5.6.13]{Forstneric2017}}]
\label{definition:elliptic}
Let $Y$ be a complex manifold.
\begin{enumerate}[leftmargin=*]
	\item A \emph{spray} $(E,\pi,s)$ on $Y$ is a triple $(E,\pi,s)$ consisting of a holomorphic vector bundle $\pi:E\to Y$ and a holomorphic map $s:E\to Y$ such that $s(0_y)=y$ for each $y\in Y$.
	\item $Y$ is \emph{elliptic} (resp. \emph{subelliptic}) if there exists a spray $(E,\pi,s)$ (resp. a family $(E_j,\pi_j,s_j)\ (j=1,\ldots,k)$ of sprays) on $Y$ such that $\der s_{0_y}(E_y)=\T_y Y$ (resp. $\sum_{j=1}^k(\der s_{j})_{0_y}(E_{j,y})=\T_y Y$) for each $y\in Y$.
	\item $Y$ is \emph{weakly subelliptic} if for any compact set $K\subset Y$ there exists a family $(E_j,\pi_j,s_j)\ (j=1,\ldots,k)$ of sprays on $Y$ such that $\sum_{j=1}^k(\der s_{j})_{0_y}(E_{j,y})=\T_y Y$ for each $y\in K$.
\end{enumerate}
\end{definition}

As we mentioned, we prove the following stronger result than Corollary \ref{corollary:non-elliptic}.

\begin{corollary}
\label{corollary:non-weakly_subelliptic}
For any $n\geq 3$, the complement $\C^{n}\setminus((\overline{\N^{-1}})^{2}\times\{0\}^{n-2})$ is Oka but not weakly subelliptic.
\end{corollary}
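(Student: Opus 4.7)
The plan is to split the statement into the Oka assertion and the non-weakly-subelliptic assertion and handle each separately.

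For the Oka property, I would apply the localization principle (Theorem \ref{theorem:localization}) to $Y:=\C^{n}\setminus S$ where $S=(\overline{\N^{-1}})^{2}\times\{0\}^{n-2}$. The derived set $S'$ contains $\{0\}\times\overline{\N^{-1}}\times\{0\}^{n-2}$ and is therefore not discrete, so Theorem \ref{theorem:main} cannot be applied to $S\subset\C^{n}$ directly. Instead, for each $p\in Y$ I would exhibit a Zariski open Oka neighborhood. If some coordinate $p_{i}\neq 0$ for $i\geq 3$, then $\{z_{i}\neq 0\}\cong\C^{n-1}\times\C^{*}$ is disjoint from $S$ and is Oka, being covered by $\C^{n}$ via $\id\times\exp$. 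Otherwise, up to swapping the first two coordinates, $p_{1}\neq 0$, and I set $U=\{z_{1}\neq 0\}$. Then $U\cap S=\N^{-1}\times\overline{\N^{-1}}\times\{0\}^{n-2}$, whose derived set in $U$ is the discrete set $\N^{-1}\times\{0\}^{n-1}$. I would pass to the universal cover $\C^{n}\to U$, $(w_{1},\ldots,w_{n})\mapsto(e^{w_{1}},w_{2},\ldots,w_{n})$. The preimage $\tilde S=\{(-\log j+2\pi im,\,b,\,0,\ldots,0):j\in\N,\ m\in\Z,\ b\in\overline{\N^{-1}}\}$ has derived set $\{(-\log j+2\pi im,\,0,\ldots,0)\}$, which is discrete in $\C^{n}$ because the set $\{-\log j+2\pi im\}\subset\C$ is discrete (a convergent subsequence would force $j$ and $m$ to be eventually constant). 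Since $|b|\leq 1$, escape to infinity is driven entirely by $|w_{1}|\to\infty$, so the projective closure of $\tilde S$ meets $\P^{n}\setminus\C^{n}$ only at $[1:0:\cdots:0]$, making $\tilde S$ tame. Theorem \ref{theorem:main} then yields that $\C^{n}\setminus\tilde S$ is Oka, and hence so is its quotient $U\setminus(U\cap S)$ under the covering (cf. \cite[Proposition 5.6.3]{Forstneric2017}); this is the required Zariski open Oka neighborhood of $p$ in $Y$.

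For the non-weakly-subelliptic part, I would adapt the argument of Andrist, Shcherbina, and Wold \cite{Andrist2016}, who proved that in a Stein manifold of dimension at least three the complement of a compact holomorphically convex set with infinite derived set is not elliptic. The plan is to fix a suitable compact set $K\subset Y$ approaching the accumulation point $0\in S'$ (for instance, a small closed arc converging to $0$ through $Y$) and show that no finite family of sprays $(E_{j},\pi_{j},s_{j})\ (j=1,\ldots,k)$ on $Y$ can achieve $\sum_{j=1}^{k}(\der s_{j})_{0_{y}}(E_{j,y})=\T_{y}Y$ simultaneously for all $y\in K$. Each spray fibre yields a holomorphic map $E_{j,y}\to Y$ avoiding $S$; the double-sequence accumulation of $(\overline{\N^{-1}})^{2}$ near $0$, combined with the transverse directions afforded by $n\geq 3$, should force a tangential direction along which all spray differentials collapse as $y\to 0$, contradicting the required uniform domination on $K$.

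The main obstacle will be the second half: the obstruction in \cite{Andrist2016} was designed for ellipticity, which supplies a single global spray, whereas weak subellipticity only provides a spray family tailored to a prescribed compact. The proof must therefore extract enough information from the compact-local spray family to force a contradiction, leveraging the precise double-sequence accumulation structure of $(\overline{\N^{-1}})^{2}$ rather than any global feature of $Y$. The first half, by contrast, is a rather clean application of the localization principle together with Theorem \ref{theorem:main}, once one observes that exponentiating a single coordinate converts the non-discrete accumulation of $S$ into a tame set with discrete derived set in the universal cover.
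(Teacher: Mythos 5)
Your proof of the Oka property is correct and is essentially the paper's own argument: cover $\C^{n}\setminus S$ by the Zariski open sets $\{z_{i}\neq 0\}$, observe that those with $i\geq 3$ miss $S$ entirely, and for $i=1,2$ pull $S$ back under the covering $\exp\times\id_{\C^{n-1}}$ to a tame closed countable set with discrete derived set, to which Theorem \ref{theorem:main} applies; then invoke the localization principle. Your verifications of tameness and of the discreteness of the derived set of the preimage are fine.

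The second half is a genuine gap, and you essentially say so yourself. The strategy you sketch --- choose a compact set in $\C^{n}\setminus S$ accumulating at $0$ and show that all spray differentials must degenerate somewhere on it --- is unproved and points in the wrong direction; note also that an arc ``converging to $0$'' is not compact in $\C^{n}\setminus S$, since $0\in S$. The paper isolates the statement as Lemma \ref{lemma:non-elliptic}: in a Stein manifold of dimension at least three, a compact holomorphically convex set $K$ with infinite derived set has a complement that is not weakly subelliptic. The proof runs as follows. Apply the definition of weak subellipticity to the compact set $\partial U$, where $U$ is a relatively compact neighborhood of $K$; the resulting sprays are globally defined on the complement of $K$ (only their domination is restricted to $\partial U$), so the Hartogs extension theorem for holomorphic vector bundles and sprays of Andrist--Shcherbina--Wold (this is where $\dim\geq 3$ enters) extends them across $K$ minus a finite set $A$. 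The degeneracy locus $B$ of the extended family is a closed subvariety missing $\partial U$, hence discrete inside $U\setminus A$, so $A\cup B$ cannot contain $K$ (this is where the infinite derived set enters), and one picks $y_{0}\in\partial K\setminus(A\cup B)$. Gromov's composition of the finitely many sprays yields a map dominating at $y_{0}$, so points outside $K$ arbitrarily close to $y_{0}$ are sprayed onto $y_{0}\in K$, contradicting the fact that the composed spray maps fibers over the complement of $K$ into the complement of $K$. In particular, the difficulty you flag --- that weak subellipticity only gives a spray family adapted to one prescribed compact --- dissolves once the definition is applied to $\partial U$; the genuinely missing ingredients in your sketch are the Hartogs extension of sprays and the composed-spray argument.
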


In order to prove Corollary \ref{corollary:non-weakly_subelliptic}, we need to improve the result of Andrist, Shcherbina and Wold \cite[Theorem 1.1]{Andrist2016} as follows.
The proof is based on their idea and Gromov's method of composed sprays (cf. \cite[\S 6.3]{Forstneric2017}).

\begin{lemma}
\label{lemma:non-elliptic}
Let $Y$ be a Stein manifold of dimension at least three and $K\subset Y$ be a compact $\cO(Y)$-convex set with an infinite derived set.
Then $Y\setminus K$ is not weakly subelliptic.
\end{lemma}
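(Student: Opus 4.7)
The strategy is to reduce to the ellipticity obstruction of Andrist--Shcherbina--Wold \cite{Andrist2016} by means of Gromov's composed spray construction. Suppose, for contradiction, that $Y\setminus K$ is weakly subelliptic. Since the derived set $K'$ is infinite, fix an accumulation point $p\in K'$, and choose a compact set $L\subset Y\setminus K$ lying close to $p$---close enough that the \emph{local} extension argument of \cite{Andrist2016} can be carried out in a small Stein neighborhood of $p$. Applying the definition of weak subellipticity to this particular $L$ supplies finitely many sprays $(E_{j},\pi_{j},s_{j})$ on $Y\setminus K$, $j=1,\ldots,k$, satisfying
\[
\sum_{j=1}^{k}(\der s_{j})_{0_{y}}(E_{j,y})=\T_{y}(Y\setminus K)\qquad\text{for every }y\in L.
\]

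The next step is to form the Gromov composition $(\tilde E,\tilde\pi,\tilde s)$ of this finite family, built by iterated pullbacks of the bundles $E_{j}$ along the maps $s_{j}$ as in \cite[\S 6.3]{Forstneric2017}. By the chain rule, the joint domination above translates directly into ordinary domination for $\tilde s$: the differential $(\der\tilde s)_{0_{y}}$ is surjective at every $y\in L$. Thus the composed spray is a \emph{single} spray on $Y\setminus K$ whose differentials along the zero section are surjective everywhere on the compact set $L$.

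With this single spray dominating on $L$ in hand, I would then run the argument of \cite[Theorem 1.1]{Andrist2016} essentially verbatim. Their proof is local around the accumulation point $p$: using that $Y$ is Stein of dimension at least three and that $K$ is $\cO(Y)$-convex, they extend the spray data holomorphically across $K$ in some Stein neighborhood of $p$ (the dimension hypothesis enters through Hartogs-type extension of coherent sheaves and maps), and then contradict the dominating property of the extended spray by using a sequence in $K$ converging to $p$. The global input their argument needs is precisely a spray dominating on a compact set accumulating at $p$, which is exactly what the composed spray $\tilde s$ provides.

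The main obstacle---and the only place one really has to do work---is the last reduction: checking that the argument of Andrist--Shcherbina--Wold genuinely only needs local domination near the accumulation point $p$, rather than the global ellipticity hypothesis stated in their theorem. Once this is verified, Gromov's composed spray trick mechanically upgrades their ellipticity obstruction to a weak subelliptic obstruction, yielding the desired contradiction.
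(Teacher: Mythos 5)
Your overall strategy---reducing to the Andrist--Shcherbina--Wold obstruction via Gromov's composed sprays---is exactly the one the paper uses, but your execution has two genuine gaps, and you yourself leave the decisive step unverified. First, the order of operations matters. You compose the sprays on $Y\setminus K$ first and then propose to extend the resulting object across $K$. But the composed spray is an iterated fiber product $E_{1}\times_{s_{1},\pi_{2}}E_{2}\times\cdots$, which is \emph{not} a holomorphic vector bundle, whereas the Hartogs extension theorem of \cite{Andrist2016} (Theorems 1.2 and 4.1) applies to vector bundles and to sprays defined on them. The paper therefore extends each individual vector bundle $E_{j}$ and spray map $s_{j}$ first (obtaining extensions over $Y\setminus A$ with $A$ finite), and only afterwards forms the fiber product over the complement of the bad set. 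Composing first closes off the only available extension tool.

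Second, the argument is not local near a pre-chosen accumulation point $p\in K'$. The crucial structural fact is this: if the family of sprays dominates on $\partial U$ for a relatively compact neighborhood $U\supset K$, then the degeneracy locus $B$ of the \emph{extended} sprays is a closed subvariety of $Y\setminus A$ missing $\partial U$, hence $B\cap U$ is compact and therefore discrete (no positive-dimensional compact subvarieties in a Stein manifold). Only then can one conclude that $K$, having an infinite derived set, is not contained in the discrete set $S=A\cup B$, and pick a good point $y_{0}\in\partial K\setminus S$ \emph{after} $S$ is known. Your compact set $L$ ``close to $p$'' does not surround $K$, so nothing prevents $B$ from being a hypersurface through $p$ that merely avoids $L$; and your $p$ is fixed in advance, so it may well lie in $A\cup B$, in which case the extended sprays need not dominate there and no contradiction arises. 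The final contradiction is also not obtained from ``a sequence in $K$ converging to $p$'' but from local surjectivity of $e\mapsto(\pi(e),s(e))$ near the zero section over $y_{0}$: points $y$ near $y_{0}$ but outside $K$ admit $e$ with $\pi(e)=y$ and $s(e)=y_{0}\in K$, contradicting $s(\pi^{-1}(Y\setminus K))\subset Y\setminus K$. Since you explicitly defer ``checking that the argument of Andrist--Shcherbina--Wold genuinely only needs local domination near $p$''---and that check in fact fails as formulated---the proposal is incomplete at its central point.
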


\begin{proof}
To reach a contradiction, we assume that $Y\setminus K$ is weakly subelliptic.
Take a relatively compact open neighborhood $U\subset Y$ of $K$.
By assumption, there exists a family $(E_j,\pi_j,s_j)\ (j=1,\ldots,k)$ of sprays on $Y\setminus K$ such that $\sum_{j=1}^k(\der s_{j})_{0_y}(E_{j,y})=\T_y Y$ for each $y\in\partial U$.
By the Hartogs extension theorem for holomorphic vector bundles and sprays \cite[Theorem 1.2 and Theorem 4.1]{Andrist2016}, we can extend $E_j\ (j=1,\ldots,k)$ to holomorphic vector bundles $\widetilde\pi_j:\widetilde E_j\to Y\setminus A\ (j=1,\ldots,k)$, where $A\subset K$ is a finite set, and $s_j\ (j=1,\ldots,k)$ to holomorphic maps $\tilde s_j:\widetilde E_j\to Y\ (j=1,\ldots,k)$.
Note that $\tilde s_j(0_y)=y$ for each $y\in Y\setminus A$ by the identity theorem.

Let $B\subset Y\setminus A$ denote the closed complex subvariety of points $y\in Y\setminus A$ such that $\sum_{j=1}^k(\der\tilde s_{j})_{0_y}(\widetilde E_{j,y})\neq\T_y Y$ and set $S=A\cup B$.
Since $U$ is relatively compact and $B\cap\partial U=\emptyset$, the intersection $B\cap U\subset U\setminus A$ must be a discrete set.
Hence $K$ is not contained in $S$ and thus we may take a point $y_{0}\in\partial K\setminus S$.
Let us consider the fiber product and the associated maps
\begin{gather*}
E=\left\{(e_1,\ldots,e_k)\in\prod_{j=1}^k\left(\widetilde E_j\setminus\left(\widetilde\pi_j^{-1}(S)\cup\tilde s_j^{-1}(S)\right)\right):
\begin{array}{c}
	\tilde s_j(e_j)=\widetilde\pi_{j+1}(e_{j+1}), \\
	j=1,\ldots,k-1
\end{array}\right\}, \\
\pi:E\to Y\setminus S,\ (e_1,\ldots,e_k)\mapsto\widetilde\pi_{1}(e_1),\quad s:E\to Y\setminus S,\ (e_1,\ldots,e_k)\mapsto\tilde s_{k}(e_k).
\end{gather*}
Note that $\der s_{(0_{y},\ldots,0_{y})}(\T_{(0_{y},\ldots,0_{y})}\pi^{-1}(y))=\sum_{j=1}^k(\der\tilde s_{j})_{0_y}(\widetilde E_{j,y})=\T_y Y$ for each $y\in Y\setminus S$ (cf. \cite[Lemma 6.3.6]{Forstneric2017}).
Therefore the fiber preserving map $E\to (Y\setminus S)^2,\ e\mapsto(\pi(e),s(e))$ restricts to a holomorphic submersion from a neighborhood of the zero section $\{(0_y,\ldots,0_y):y\in Y\setminus S\}$ onto a neighborhood of the diagonal $\{(y,y):y\in Y\setminus S\}$.
Thus there exists an open neighborhood $V\subset Y\setminus S$ of $y_{0}$ such that for each $y\in V$ there exists $e\in E$ such that $\pi(e)=y$ and $s(e)=y_{0}$.
This contradicts to $y_{0}\in\partial K$ and $s(\pi^{-1}(Y\setminus K))\subset Y\setminus K$.
\end{proof}

\begin{proof}[Proof of Corollary \ref{corollary:non-weakly_subelliptic}]
Since $\overline{\N^{-1}}\subset\C$ is polynomially convex, $(\overline{\N^{-1}})^2\times\{0\}^{n-2}\subset\C^n$ is also polynomially convex.
Clearly the derived set of $(\overline{\N^{-1}})^{2}\times\{0\}^{n-2}\subset\C^{n}$ is infinite.
Thus Lemma \ref{lemma:non-elliptic} implies that its complement is not weakly subelliptic.

In order to prove that the complement is Oka, we use the localization principle for Oka manifolds (Theorem \ref{theorem:localization}).
Set $U_{j}=\C^{j-1}\times\C^{*}\times\C^{n-j}\ (j=1,\ldots,n)$ and $S=(\overline{\N^{-1}})^{2}\times\{0\}^{n-2}$.
Note that $\C^{n}\setminus S=\bigcup_{j=1}^{n}(U_{j}\setminus S)$ and each $U_{j}\setminus S$ is Zariski open in $\C^{n}\setminus S$.
By the localization principle, it suffices to show that each $U_{j}\setminus S$ is Oka.
For $j\geq 3$, $U_{j}\cap S=\emptyset$ and thus $U_{j}\setminus S=U_{j}$ is Oka.
Consider the exponential map $\pi=\exp\times\id_{\C^{n-1}}:\C^{n}\to U_{1}$.
Since $\pi$ is a covering map, $U_{1}\setminus S$ is Oka if and only if $\C^{n}\setminus\pi^{-1}(S)$ is Oka (cf. \cite[Proposition 5.6.3]{Forstneric2017}).
By definition, $\pi^{-1}(S)=\exp^{-1}(\N^{-1})\times(\overline{\N^{-1}})\times\{0\}^{n-2}$.
Note that $\pi^{-1}(S)$ is tame and its derived set $\pi^{-1}(S)'=\exp^{-1}(\N^{-1})\times\{0\}^{n-1}$ is discrete.
Therefore Theorem \ref{theorem:main} implies that $\C^{n}\setminus\pi^{-1}(S)$ is Oka and  thus $U_{1}\setminus S\cong U_{2}\setminus S$ are Oka.
\end{proof}

\begin{proof}[Proof of Corollary \ref{corollary:Hopf}]
Set $n=\dim Y>1$.
By Kodaira's argument \cite[Theorem 30]{Kodaira1966} (see also \cite{Hasegawa1993}), there exists a finite (unramified) covering map $(\C^{n}\setminus\{0\})/\langle\varphi\rangle\to\ Y$ where $\varphi\in\Aut\C^{n}$ is a holomorphic contraction, i.e. $\varphi(0)=0$ and $\varphi^{j}\to 0$ uniformly on compacts as $j\to\infty$.
Thus we may assume that $Y=(\C^{n}\setminus\{0\})/\langle\varphi\rangle$ from the beginning.
Let $\pi:\C^{n}\setminus\{0\}\to Y$ denote the quotient map.
Since $\varphi$ is a holomorphic contraction, there exists a holomorphic coordinate system on $\C^{n}$ in which $\varphi$ is lower triangular (cf. \cite[p.\,117]{Forstneric2017}).
In this coordinate system, the discrete set $\pi^{-1}(S)\setminus\overline{\B^n(0,1)}\subset\C^n$ projects to a discrete set in the first coordinate, and hence it is tame (cf. \cite[Theorem 3.9]{Rosay1988}).
Note that $(\pi^{-1}(S)\cap\overline{\B^n(0,1)})\cup\{0\}$ is compact.
Thus $\pi^{-1}(S)\cup\{0\}\subset\C^{n}$ is tame and $(\pi^{-1}(S)\cup\{0\})'=\{0\}$.
Therefore Theorem \ref{theorem:main} (resp. Theorem \ref{theorem:blowup}) implies the Oka property of the complement $(\C^{n}\setminus\{0\})\setminus \pi^{-1}(S)$ (resp. the blowup $\Bl_{\pi^{-1}(S)}(\C^{n}\setminus\{0\})$) and hence the complement $Y\setminus S$ (resp. the blowup $\Bl_{S}Y$) is Oka.
\end{proof}

%%%%%%%%%%%%%%%%%%%%%%%%%%%%%%%%%%%%%%%%%%
%
% Acknowledgement
%
%%%%%%%%%%%%%%%%%%%%%%%%%%%%%%%%%%%%%%%%%%

\section*{Acknowledgement}

I wish to thank my supervisor Katsutoshi Yamanoi for many constructive comments, and Franc Forstneri\v{c} for helpful remarks.
This work was supported by JSPS KAKENHI Grant Number JP18J20418.
%the Grant-in-Aid for Scientific Research (KAKENHI No.18J20418).

%%%%%%%%%%%%%%%%%%%%%%%%%%%%%%%%%%%%%%%%%%
%
% References
%
%%%%%%%%%%%%%%%%%%%%%%%%%%%%%%%%%%%%%%%%%%

%\bibliographystyle{abbrv}
%\bibliography{references}

\begin{thebibliography}{10}


\bibitem{Andrist2016}
R.~B. Andrist, N.~Shcherbina, and E.~F. Wold.
\newblock The {H}artogs extension theorem for holomorphic vector bundles and
  sprays.
\newblock {\em Ark. Mat.}, 54(2):299--319, 2016.


\bibitem{Andrist2019}
R.~B. Andrist and R.~Ugolini.
\newblock A new notion of tameness.
\newblock {\em J. Math. Anal. Appl.}, 472(1):196--215, 2019.


\bibitem{Forstneric2013}
F.~Forstneri\v{c}.
\newblock Oka manifolds: from {O}ka to {S}tein and back.
\newblock {\em Ann. Fac. Sci. Toulouse Math. (6)}, 22(4):747--809, 2013.
\newblock With an appendix by Finnur L\'{a}russon.


\bibitem{Forstneric2017}
F.~Forstneri\v{c}.
\newblock {\em Stein manifolds and holomorphic mappings}, volume~56 of {\em
  Ergebnisse der Mathematik und ihrer Grenzgebiete. 3. Folge. A Series of
  Modern Surveys in Mathematics [Results in Mathematics and Related Areas. 3rd
  Series. A Series of Modern Surveys in Mathematics]}.
\newblock Springer, Cham, second edition, 2017.
\newblock The homotopy principle in complex analysis.


\bibitem{Forstneric2002}
F.~Forstneri\v{c} and J.~Prezelj.
\newblock Oka's principle for holomorphic submersions with sprays.
\newblock {\em Math. Ann.}, 322(4):633--666, 2002.


\bibitem{Gromov1989}
M.~Gromov.
\newblock Oka's principle for holomorphic sections of elliptic bundles.
\newblock {\em J. Amer. Math. Soc.}, 2(4):851--897, 1989.


\bibitem{Hasegawa1993}
K.~Hasegawa.
\newblock Deformations and diffeomorphism types of {H}opf manifolds.
\newblock {\em Illinois J. Math.}, 37(4):643--651, 1993.


\bibitem{Kodaira1966}
K.~Kodaira.
\newblock On the structure of compact complex analytic surfaces. {II}.
\newblock {\em Amer. J. Math.}, 88:682--721, 1966.


\bibitem{Kusakabe2019}
Y.~Kusakabe.
\newblock Elliptic characterization and localization of {O}ka manifolds.
\newblock {\em Indiana Univ. Math. J.}, to appear.


\bibitem{Rosay1988}
J.-P. Rosay and W.~Rudin.
\newblock Holomorphic maps from {${\bf C}^n$} to {${\bf C}^n$}.
\newblock {\em Trans. Amer. Math. Soc.}, 310(1):47--86, 1988.


\bibitem{Winkelmanna}
J.~Winkelmann.
\newblock Tame discrete sets in algebraic groups.
\newblock arXiv:1901.08952.


\bibitem{Winkelmann2019}
J.~Winkelmann.
\newblock Tame discrete subsets in {S}tein manifolds.
\newblock {\em J. Aust. Math. Soc.}, 107(1):110--132, 2019.


\end{thebibliography}

\end{document}